\definecolor{darkred}{RGB}{100,0,0}
\definecolor{darkgreen}{RGB}{0,100,0}
\definecolor{darkblue}{RGB}{0,0,150}
\newtheorem{thm}{Theorem}
\newtheorem{prp}{Proposition}
\newtheorem{lem}{Lemma}
\newtheorem{cor}{Corollary}
\theoremstyle{remark}
\newtheorem{rem}{Remark}
\def\beq{\begin{equation}}
\def\eeq{\end{equation}}
\def\beqn{\begin{eqnarray*}}
\def\eeqn{\end{eqnarray*}}
\def\bitem{\begin{itemize}}
\def\eitem{\end{itemize}}
\def\benum{\begin{enumerate}}
\def\eenum{\end{enumerate}}
\def\bmult{\begin{multline*}}
\def\emult{\end{multline*}}
\def\bcenter{\begin{center}}
\def\ecenter{\end{center}}
\newcommand{\thmref}[1]{Theorem~\ref{thm:#1}}
\newcommand{\prpref}[1]{Proposition~\ref{prp:#1}}
\newcommand{\corref}[1]{Corollary~\ref{cor:#1}}
\newcommand{\lemref}[1]{Lemma~\ref{lem:#1}}
\newcommand{\secref}[1]{Section~\ref{sec:#1}}
\DeclareMathOperator{\tr}{tr}
\def\cC{\mathcal{C}}
\def\cE{\mathcal{E}}
\def\cG{\mathcal{G}}
\def\cH{\mathcal{H}}
\def\cN{\mathcal{N}}
\def\cV{\mathcal{V}}
\def\bA{\mathbf{A}}
\def\bB{\mathbf{B}}
\def\bI{\mathbf{I}}
\newcommand\bGamma{{\boldsymbol\Gamma}}
\def\bbI{\mathbb{I}}
\def\bbP{\mathbb{P}}
\def\bbR{\mathbb{R}}
\def\bbZ{\mathbb{Z}}
\newcommand{\E}{\operatorname{\mathbb{E}}}
\renewcommand{\P}{\operatorname{\mathbb{P}}}
\newcommand{\Var}{\operatorname{Var}}
\newcommand{\Cov}{\operatorname{Cov}}
\def\Bin{\text{Bin}}
\newcommand{\PROB}{\bbP}
\newcommand{\wt}{\widetilde}
\def\tr{\operatorname{Tr}}
\newcommand{\1}{{\rm 1}\kern-0.24em{\rm I}}
\newcommand{\IND}[1]{\bbI\{ #1 \}}
\def\X{X}
\begin{document}

\title{Detecting a Path of Correlations in a Network}
\author{
Ery Arias-Castro\footnote{Department of Mathematics, University of California, San Diego, United States} 
\and
G\'abor Lugosi\footnote{ICREA and Department of Economics and Business, Pompeu Fabra University, Barcelona, Spain, 
} \and
Nicolas Verzelen\footnote{INRA, UMR 729 MISTEA, F-34060 Montpellier, France}
} 
\date{}
\maketitle

\begin{abstract}
We consider the problem of detecting an anomaly in the form of a path of correlations hidden in white noise.  We provide a minimax lower bound and a test that, under mild assumptions, is able to achieve the lower bound up to a multiplicative constant.  
\end{abstract}

\section{Introduction} \label{sec:intro}

Anomaly detection arises in many applications, including surveillance, the detection of suspicious objects from satellite images or sensor networks, as well as in medical imaging (e.g., tumor detection).
While in some applications the object can be assumed to present a larger signal amplitude (e.g., pixel level in images), in other settings it manifests instead as correlations. For example, the object to be detected in an image has different texture but same average pixel amplitude; or in the case of the evolution of the price of a stock, an event could trigger more volatility instead of a change in the value of the stock.
We call the problem of detecting the presence of a subset of observations with different mean from the
rest the {\em detection-of-means} problem. 
We call the problem of detecting the presence of a subset of unusually correlated observations the {\em detection-of-correlations} problem.

The detection-of-means problem has been extensively studied in the literature, both applied and theoretical. Papers that develop theory include \citep{maze,combin,cluster,MGD,MR2604703,morel}.
The detection-of-correlations problem has drawn less attention, and while the applied literature is sizable, few papers develop theory beyond the one-dimensional case of change-point detection in times series.
In a few recent papers, we developed elements of the first minimax theory, see \citep{correlation-detect,multidim,arias2015detecting}.
In the present paper we focus on detecting a path of correlations in a general graph.  This setting could model an attack in a computer network \citep{mukherjee1994network,zhang2000intrusion}.  
The corresponding detection-of-means setting was considered in \citep{maze}.

The remainder is organized as follows.  In \secref{general}, after formalizing the problem, we derive a lower bound when the correlation parameter is known and then propose a testing procedure which achieves this lower bound (up to a multiplicative constant depending on some graph characteristics) without knowledge of the correlation parameter.
In \secref{lattice} we specialize our general results to the case of detecting a path of correlations in an
integer lattice.
The proofs are gathered in \secref{proofs}.

\section{Setting and general results} \label{sec:general}

\subsection{Formulation of the problem}

We are given a graph $\cG = (\cV, \cE)$, with $\cV$ denoting the set of nodes (or vertices) and $\cE \subset \cV \times \cV$ denoting the set of edges.
When finite, let $n = |\cV|$ denote the number of nodes.
We are also given a class of open, self-avoiding paths of $\cG$, denoted $\cC$.
Recall that an open self-avoiding path in $\cG$ is a sequence of nodes $S = (s_1, \dots, s_k) \in \cV^k$ such that $(s_j, s_{j+1}) \in \cE$ for all $j$ and $s_j \ne s_{j'}$ for all $j \ne j'$.
We observe a vector of random variables indexed by $\cV$, denoted $X = (X_i)_{i \in \cV}$, assumed to be standard normal.
Under the null hypothesis all components of $X$ are independent.
Under the alternative hypothesis, one of the paths $S \in \cC$ is ``anomalous'',
in which case $(X_i)_{i \in S}$ is an autoregressive model of order $1$ with correlation coefficient $\psi \in (-1,1)$, while the other components of $X$ are still independent and $(X_i)_{i \in S}$ and $(X_i)_{i \notin S}$
are also independent.
This means that, if $S = (s_1, \dots, s_k)$, then $X_{s_{j+1}}-\psi X_{s_j}, j = 1, \dots, k-1,$ are independent centered normal random variables with variance $1-\psi^2$. 
We denote the distribution of $\X$ under $\cH_0$ by $\PROB_0$.
We denote the distribution of $\X$ under $\cH_1$ by $\PROB_{S,\psi}$ when $S\in \cC$ is the anomalous set and $\psi$ is the autocorrelation coefficient.  

A \emph{test} is a measurable function $f: \bbR^\cV \to \{0,1\}$. When $f(\X)=0$, 
the test accepts the null hypothesis and it rejects it otherwise.
The probability of \emph{type I} error of a test $f$ is $\PROB_0\{f(\X)=1\}$.
Under the alternative $(S,\psi) \in \cC \times (-1,1)$, the probability of \emph{type II} error is $\PROB_{S,\psi}\{f(\X)=0\}$.
In this paper we evaluate tests based on their \emph{worst-case risks}.
When the correlation coefficient $\psi$ is known, the risk of a test $f$ corresponding to the class $\cC$ is defined as 
\[
R_{\cC,\psi}(f) = \PROB_0\{f(\X)=1\} +  \max_{S \in \cC} \, \PROB_{S,\psi}\{f(\X)=0\}~.
\]
In this case, the minimax risk is defined as 
\[
R^*_{\cC,\psi} = \inf_f R_{\cC,\psi}(f)~,
\]
where the infimum is over all tests $f$.
When $\psi$ is only known to belong to an interval $\mathfrak{I} \subset (-1,1)$, it is more meaningful to define the risk of a test $f$ as
\[
R_{\cC, \mathfrak{I}}(f) = \PROB_0\{f(\X)=1\} +  \max_{\psi \in \mathfrak{I}} \max_{S \in \cC} \, \PROB_{S,\psi}\{f(\X)=0\}~.
\]
The corresponding minimax risk is defined as
\[
R_{\cC, \mathfrak{I}}^* = \inf_f R_{\cC, \mathfrak{I}}(f)~.
\]
When $\psi$ is known (resp.~unknown), we say that a test $f$ \emph{asymptotically separates the two hypotheses} if $R_{\cC,\psi}(f) \to 0$ (resp.~$R_{\cC, \mathfrak{I}}(f) \to 0$), and we say that the hypotheses \emph{merge asymptotically} if $R_{\cC,\psi}^* \to 1$ (resp.~$R_{\cC, \mathfrak{I}}^* \to 1$), as $n = |\cV| \to \infty$.  We note that, as long as $\psi \in \mathfrak{I}$, $R_{\cC,\psi}^* \le R_{\cC, \mathfrak{I}}^*$ and that $R_{\cC, \mathfrak{I}}^* \le 1$, since the test $f \equiv 1$, that always rejects $\cH_0$, has risk equal to $1$.

In this paper, we characterize the minimax testing risk in the setting where $\psi$ is known ($R^*_{\cC,\psi}$) and in the setting when it is unknown ($R_{\cC, \mathfrak{I}}^*$).  
That is, we give conditions on $\cC$ and $\mathfrak{I}$ under which the hypotheses merge asymptotically so that the detection problem is nearly impossible. 
We then exhibit a (nonstandard) test that asymptotically separates the hypotheses under essentially the same conditions.

\subsection{A general lower bound} \label{sec:lower}

The main difference between the case of anomalous paths treated here and the case of anomalous blobs studied in \cite{arias2015detecting} is that a lower bound for the latter can be developed based on a subclass of disjoint subsets.  Here, however, a reduction to a subclass of disjoint paths is typically too severe.  We thus develop a new lower bound tailored to the present situation, which, in particular, allows for possible anomalous subsets to intersect.
For any prior distribution $\nu$ on $\cC$, the minimax risk is at least as large as the $\nu$-average risk, $R^*_{\cC,\psi} \ge \bar{R}^*_{\nu,\psi}$, where
\beq \label{risk-nu-pi}
\bar{R}_{\nu,\psi}(f) = \PROB_0\{f(\X)=1\} + \sum_{S\in \cC} \nu(S)  \PROB_{S,\psi}\{f(\X)=0\} \quad \text{and} \quad \bar{R}^*_{\nu,\psi} = \inf_f \bar{R}_{\nu,\pi}(f)~.
\eeq
The following result provides a lower bound on the latter. 

\begin{thm} \label{thm:graph}
Let $\cC$ be a class of open self-avoiding paths of $\cG$ and let $\nu$ denote some prior over $\cC$.  Assume that $|\psi| < 1/9$.
Then 
\[
\bar{R}_{\nu, \psi}^* \ge 1 - \frac12 \sqrt{\E_{\nu \otimes \nu}\left[\exp\big(\lambda(\psi) |S \cap T|\big)\right] - 1}~, 
\]
where
\[
\lambda(\psi) := \frac14 \left[ \left(\frac{1-|\psi|}{1-9|\psi|}\right)^{1/2} - \frac{1+|\psi|}{1-|\psi|}\right]
\]
and the expectation is with respect to $S,T$ drawn i.i.d.~from $\nu$. 
\end{thm}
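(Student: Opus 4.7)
The plan is a standard chi-square reduction to a pointwise estimate on $\E_0[L_S L_T]$, combined with a conditioning argument that restricts attention to the intersection $W := S \cap T$.

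By Le Cam's identity $\bar R_{\nu,\psi}^* = 1 - \|\bar \PROB - \PROB_0\|_{TV}$ with $\bar \PROB = \sum_S \nu(S)\PROB_{S,\psi}$, followed by Cauchy-Schwarz applied to the $\chi^2$-divergence,
\[
\bar R_{\nu,\psi}^* \;\ge\; 1 - \tfrac12\sqrt{\E_0[\bar L^2] - 1},
\qquad \bar L \;=\; \sum_{S \in \cC} \nu(S)\, L_S, \quad L_S := \tfrac{d\PROB_{S,\psi}}{d\PROB_0}.
\]
Expanding $\E_0[\bar L^2] = \E_{\nu \otimes \nu}[\E_0[L_S L_T]]$ shows that the theorem reduces to the pointwise bound $\E_0[L_S L_T] \le \exp(\lambda(\psi)|S \cap T|)$ for all $S, T \in \cC$.

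For fixed $S, T$, set $W := S \cap T$. Under $\PROB_0$ the three blocks $\X_{S\setminus W}$, $\X_{T\setminus W}$ and $\X_W$ are jointly independent, and since $L_S$ and $L_T$ depend on $\X_S$ and $\X_T$ respectively, they are conditionally independent given $\X_W$. Integrating out the non-intersection coordinates identifies the conditional expectation as the $W$-marginal likelihood ratio,
\[
\E_0[L_S \mid \X_W] \;=\; g_S(\X_W) \;:=\; \phi_{\Sigma_{S,W}}(\X_W)/\phi_I(\X_W),
\]
where $\phi_\Sigma$ denotes the centered Gaussian density with covariance $\Sigma$ and $\Sigma_{S,W}$ is the principal submatrix of the AR$(1)$ covariance $\Sigma_S$ indexed by $W$. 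Hence $\E_0[L_S L_T] = \E_0[g_S(\X_W)\, g_T(\X_W)]$, a Gaussian integral on the $|W|$-dimensional space with $\X_W \sim N(0, I_W)$ under $\PROB_0$.

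To bound this integral by $e^{\lambda(\psi)|W|}$, I would apply H\"older's inequality with exponents $(p, q) = (5, 5/4)$:
\[
\E_0[g_S g_T] \;\le\; (\E_0[g_S^5])^{1/5}\,(\E_0[g_T^{5/4}])^{4/5}.
\]
A Gaussian quadratic-form computation yields $\E_0[g_S^p] = \det(\Sigma_{S,W})^{-p/2}\det(p\Sigma_{S,W}^{-1} - (p-1)I)^{-1/2}$, which is finite iff $\lambda_{\max}(\Sigma_{S,W}) < p/(p-1)$. By Cauchy interlacing and the AR$(1)$ spectral bound $\lambda_{\max}(\Sigma_S) \le (1+|\psi|)/(1-|\psi|)$, the case $p=5$ requires exactly $|\psi| < 1/9$, matching the hypothesis. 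The two determinants are then estimated via (i) the Gauss--Markov factorization $\det(\Sigma_{S,W}) = \prod_l (1-\psi^{2a_l})$, where the $a_l$ are the gaps between consecutive $W$-vertices along $S$, and (ii) the eigenvalue lower bound $\lambda_{\min}(5\Sigma_{S,W}^{-1} - 4I) \ge (1-9|\psi|)/(1+|\psi|)$, giving an exponential-in-$|W|$ bound on each H\"older factor.

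The main obstacle is the bookkeeping needed to extract the precise analytic form $\lambda(\psi) = \tfrac14\bigl[\sqrt{(1-|\psi|)/(1-9|\psi|)} - (1+|\psi|)/(1-|\psi|)\bigr]$. The factor $(1-9|\psi|)$ and the square root are the fingerprint of controlling $\det(5\Sigma_{S,W}^{-1} - 4I)^{-1/2}$ through its minimum eigenvalue; the quantity $(1+|\psi|)/(1-|\psi|)$ records the AR$(1)$ top eigenvalue; and the prefactor $\tfrac14$ reflects the combined effect of the H\"older exponents $1/p = 1/5$, $1/q = 4/5$ together with the $-1/2$ in $\det^{-1/2}$. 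The threshold $|\psi| < 1/9$ is sharp for this route, as it is exactly where $\E_0[g_S^5]$ ceases to be finite uniformly in $|W|$.
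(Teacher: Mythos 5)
Your first two steps match the paper (the chi-square reduction via \eqref{LR-risk} and the expansion \eqref{L2}), but after that you take a genuinely different route, and unfortunately it does not reach the stated bound.

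The marginalization to $W = S\cap T$ is a clean and correct idea: under $\PROB_0$, $L_S$ and $L_T$ are conditionally independent given $\X_W$, and your formula $\E_0[L_S\mid\X_W] = \phi_{\Sigma_{S,W}}(\X_W)/\phi_{I_W}(\X_W)$ is right, as are the Gaussian moment formula for $\E_0[g_S^p]$, the Gauss--Markov factorization $\det(\Sigma_{S,W}) = \prod_l (1-\psi^{2a_l})$, and the AR$(1)$ spectral bounds. The issue is the H\"older step. Replacing $\E_0[g_S g_T]$ by $(\E_0[g_S^5])^{1/5}(\E_0[g_T^{5/4}])^{4/5}$ and then bounding the resulting determinants by powers of the extreme eigenvalues yields an exponent of the form
\[
\tfrac{1}{2}\log\tfrac{1+|\psi|}{1-|\psi|} \;+\; \tfrac{1}{10}\log\tfrac{1-|\psi|}{1-9|\psi|} \;+\; \tfrac{2}{5}\log\tfrac{2(1-|\psi|)}{2-3|\psi|}\quad \text{(per unit of $|W|$),}
\]
which for small $|\psi|$ is about $2|\psi|$, several times larger than $\lambda(\psi)\approx |\psi|/2$. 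More fundamentally, this is a \emph{sum} of logarithms: there is no mechanism by which a difference such as $\sqrt{(1-|\psi|)/(1-9|\psi|)} - (1+|\psi|)/(1-|\psi|)$ with a negative second term can emerge from multiplying determinant bounds, and $\lambda(\psi)$ is small (it vanishes as $\psi\to 0$) precisely because of that subtraction. The coincidence that $p=5$ reproduces the threshold $|\psi|<1/9$ is real, but it does not propagate to the constant.

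What the paper does instead is compute $\E_0[L_S L_T]$ exactly as a determinant over $S\cup T$, expand $\log\det(\bI - \bB)$ as a trace series, observe that the $\ell=0,1$ terms cancel between the numerator and denominator determinants, and bound each remaining trace by $|S\cap T|\,\zeta^\ell$ via \lemref{trace}. This cancellation is exactly what produces the two-term expression for $\lambda(\psi)$ (the $\sqrt{\cdot}$ is $\sum_\ell\binom{2\ell}{\ell}\zeta^\ell$ and the subtracted term is the removed $\ell\le 1$ contribution). Your $W$-marginalization could actually be combined with the same log-determinant expansion: writing $\Sigma_{S,W}^{-1} = I_W - A_S$ and similarly for $T$, one has
\[
\log\E_0[g_S g_T] = -\tfrac12\log\det(I_W - A_S - A_T) + \tfrac12\log\det(I_W - A_S) + \tfrac12\log\det(I_W - A_T),
\]
and then $\|A_S\|,\|A_T\|\le 2|\psi|/(1-|\psi|)$ and the ambient dimension is already $|W|$, so no trace lemma is even needed. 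That route would reproduce the theorem. But H\"older is a one-way, lossy inequality that destroys the required cancellation, so the plan as written cannot be completed to give the stated $\lambda(\psi)$.

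(For what it is worth, the paper's own proof actually yields $\frac14\bigl[\sqrt{(1-|\psi|)/(1-9|\psi|)} - (1+3|\psi|)/(1-|\psi|)\bigr]$; the displayed $\lambda(\psi)$ appears to have a typo and is slightly weaker than what is proved, but both are $\le$ your H\"older exponent, so the conclusion above stands.)
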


The function $\lambda$ is even and increasing on $(0, 1/9)$, and $\lambda(1/10) = 4/9$.  Hence, the bound implies that  
\[
\bar{R}_{\nu, \psi}^* \ge 1 - \frac12 \sqrt{\E_{\nu \otimes \nu}\left[\exp\big(\tfrac49 |S \cap T|\big)\right] - 1}~, \quad \text{when } |\psi| \le 1/10.
\]

\begin{rem}
The condition on $\psi$ is likely to be an artifact of our proof technique.  As is standard in the literature, the proof relies on bounding the variance of the likelihood ratio resulting from averaging the alternatives according to $\nu$.  A more refined approach such as that of bounding the second moment of the likelihood ratio after truncation---a well-known technique in the detection-of-means setting first proposed by Yuri Ingster---might lead to a sharpening of this result, but the computations for the present case are daunting.
\end{rem}

\subsection{A general upper bound} \label{sec:upper}

A natural approach in related testing problems is the generalized likelihood ratio test.
When $\psi$ is known, this test is based on rejecting the null hypothesis for large values of 
\[\max_{S \in \cC} X_S^\top (\bI_S - \bGamma_S^{-1}(\psi)) X_S~,\]
where $\bGamma_S(\psi)$ denotes the covariance matrix of an autoregressive model of order 1
indexed by $S$ and with parameter $\psi$ and $X_S=\sum_{i\in S} X_i$.
Establishing a useful performance bound for this test appears surprisingly challenging due to our lack of understanding of concentration properties of the test statistic under the null hypothesis.  In particular, our effort to combine the union bound with a standard  concentration bound for Gaussian quadratic forms (i.e., Gaussian chaoses of order $2$) were inconclusive.
The situation is even more complicated when $\psi$ is unknown.

However, we were able to craft and analyze an ad-hoc test based on pairwise comparisons of consecutive values along a path.  
For simplicity assume that all paths in $\cC$ are of same length $k$.  (When this is not the case, typically the test needs to be repeated for all possible lengths and the resulting multiple testing situation is resolved by applying Bonferroni's method.)
Fix a threshold $t > 0$.  For $S = (s_1, \dots, s_k) \in \cC$,  define
\[
V_{t,S} = \sum_{j = 1}^{k-1} V_{t,S}(j)~, \quad V_{t,S}(j) = \IND{|X_{s_{j+1}} - X_{s_j}| \le \sqrt{2} t}~,
\]
and consider the statistic and corresponding test 
\beq\label{def_test_vt}
V_t^* = \max_{S \in \cC} V_{t,S}~, \quad f_t = \IND{V_t^* > k/2 }~.
\eeq


For $t \ge 0$, define $p_t = 2 \mathsf{N}(t) - 1$, where $\mathsf{N}$ denotes the standard normal distribution function.  Also, define the function $h(x) = x -\log(x)-1$.

\begin{prp} \label{prp:V}
There is a sequence of reals $(r_k)$, with $r_k \to 0$ as $k \to \infty$, such that the following is true.
Consider any setting where $\cC$ is a class of (self-avoiding) paths of length $k$.
Let $t = t(k) > 0$ be largest such that $h(2p_t)\geq \frac8k \log(|\cC|) \vee 1$.
If $\psi \ge 1 - (t/\mathsf{N}^{-1}(4/5))^2$, the test $f_t$ defined in \eqref{def_test_vt} satisfies $R_{\cC, \psi}(f_t) \le r_k$. 
\end{prp}

\begin{rem}
\label{rem:negative-psi}
The proposition only applies to positive $\psi$ and in fact the test \eqref{def_test_vt} is only useful in that case.  
To handle the case where $\psi$ is negative, we use instead the variant where $V_{t,S}(j)$ is replaced with $\IND{|X_{s_{j+1}} + X_{s_j}| \le \sqrt{2} t}$.
The resulting test achieves a similar performance.
In practice, if the sign of $\psi$ is a priori unknown, one can simply combine these two tests using a Bonferroni correction.  In the rest of the paper we only consider $\psi>0$.
\end{rem}

\begin{rem}
Computing the test statistic $V_t^*$ of \eqref{def_test_vt} is difficult, even when the starting point is given.  Indeed, this problem is known as the {\em prize collecting salesman problem} or {\em bank robber problem} or {\em reward-budget problem}.  Even in the case where the underlying graph is the integer lattice there are no known polynomial-time algorithms that solve it, although polynomial approximations do exist  \cite{DasGuptaHespanhaRiehlSontag06}.
An alternative is the test based on the length of the longest path of significant adjacent correlations. 
This is inspired from some proposals in the detection-of-means setting \cite{arias2013cluster,arias2006adaptive}.
Quick calculations suggest that the test achieves a comparable theoretical performance.
\end{rem}

\section{Special case: the lattice} \label{sec:lattice}

Consider the integer lattice 
\beq \label{lattice}
\cV = \{1, \dots, m\}^d~,
\eeq
in dimension $d \ge 3$.  Note that $n = m^d$ in this case.
The story is a little different when $d = 2$, and we refer the reader to the treatment in \cite{maze} carried out in the detection-of-means setting, as we expect a similar phenomenon to hold in the present context.
For simplicity, to guarantee that all nodes play a symmetric role, we take the lattice to be a torus.

\subsection{Known starting point}
Let $\cC$ be the class of all self-avoiding paths with $k$ nodes in $\cV$ starting at some given $v_0 \in \cV$.  In that case, when $d \ge 3$, there is a constant $C > 0$ such that, when $|\psi| \le C$, the risk is at least $1/2$.    To see this, let $\nu$ be a prior on $\cC$ that has exponential intersection tails, which means that there exist some constants $\eta \in (0,1)$ and $C_0 > 0$ such that
\beq\label{EIT}
\P_{\nu \otimes \nu}(|S \cap T| \ge \ell) \le C_0 \eta^\ell, \quad \forall \ell \ge 1~,
\eeq
where $S,T$ are i.i.d.~from $\nu$.  
This concept was introduced in \cite[Theorem 1.3]{MR1634419}, where it is shown that such a prior exists on infinite paths ($k = \infty$) in the infinite $d$-dimensional integer lattice ($m=\infty$) when $d = 3$.  In fact, it is constructed with support on oriented paths taking steps in $\{(1,0,0), (0,1,0), (0,0,1)\}$.
Obviously, in the finite case, it suffices to restrict such a prior on the first $k-1$ steps and property \eqref{EIT} still holds when $k \le m$, which we assume henceforth.\footnote{Clearly, an upper bound on $k$ is needed for there are no self-avoiding paths when $k > n$.}  
Note that we can use the same prior when $d \ge 3$, by embedding the 3-dimensional integer lattice into the $d$-dimensional integer lattice.
Because of this, we can take $\eta$ to be a numeric constant  not depending on the dimension $d$.

Thus consider a prior $\nu$ satisfying \eqref{EIT}.
Then, for any $a > 0$ small enough that $e^a \eta < 1$, 
\begin{align*}
\E_{\nu \otimes \nu}[\exp(a |S \cap T|)] 
&=  \sum_{\ell \ge 1} e^{a \ell} \P_{\nu \otimes \nu}(|S \cap T| = \ell) \\
&=  e^a + \sum_{\ell \ge 2} (e^{a \ell} - e^{a (\ell-1)}) \P_{\nu \otimes \nu}(|S \cap T| \ge \ell) \\
&\le  \Xi(a) := e^a + C_0 \frac{(e^a-1)e^a \eta^2}{1 - e^a \eta}~. 
\end{align*}
We use this upper bound in \thmref{graph} with $a = \lambda(|\psi|)$ and get that $\bar{R}_{\nu, \psi}^* \ge 1/2$ when $|\psi|$ is sufficiently small that $\eta \exp(\lambda(|\psi|)) < 1$ and $\Xi(\lambda(|\psi|)) \le 2$.  This is possible since $\lambda(|\psi|) \to 0$ as $|\psi| \to 0$ and $\Xi(a) \to 1$ as $a \to 0$.

Conversely, there exists a positive constant depending only on the dimension $d$ such that, if $\psi$ is larger than that constant, then the test defined in \eqref{def_test_vt} asymptotically separates the hypotheses.  This comes from a simple application of \prpref{V} together with the fact that, in the $d$-dimensional integer lattice, there are at most $(2d)^{k-1}$ paths of length $k$ starting at a given node.
Following Remark~\ref{rem:negative-psi}, we can handle the case when $|\psi|$ is large enough in a similar way.
From this discussion, we arrive at the following.

\begin{cor} \label{cor:known}
Consider the $d$-dimensional integer lattice \eqref{lattice} with $d\ge 3$ seen as a torus.  Let $\cC$ denote the class of self-avoiding paths of length $k \le m$ starting at a known location.  Assume the autocorrelation coefficient $\psi$ is fixed.  
There exist constants $0 < C_1\le C_2 < 1$ depending only on $d$ such that, when $|\psi| < C_1$, $\liminf_{k \to \infty} R^*_{\cC,\psi} > 0$, while when $|\psi| > C_2$, $\lim_{k \to \infty} R^*_{\cC,\psi} = 0$.
\end{cor}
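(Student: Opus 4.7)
The corollary naturally splits into the lower and upper bound, and the text preceding it already sketches both. The plan is to formalize those sketches.

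For the lower bound, my first step is to exhibit a prior $\nu$ on $\cC$ satisfying the exponential intersection tails property \eqref{EIT} with constants $\eta,C_0$ depending only on (a low-dimensional slice of) the lattice. I would invoke the Benjamini--Pemantle--Peres construction on oriented paths in $\ZZ^3$, restrict to the first $k-1$ steps (valid since $k\le m$ and the torus is large enough for no wrap-around), and embed $\ZZ^3$ isometrically in $\ZZ^d$ for $d\ge 3$; this yields $\eta$ independent of $d$. Then I would apply \thmref{graph} to this $\nu$: the moment-generating function of $|S\cap T|$ expands telescopically as in the excerpt into $\Xi(a) = e^a + C_0 (e^a-1)e^a\eta^2/(1-e^a\eta)$, valid for $a < \log(1/\eta)$. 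Plugging in $a=\lambda(|\psi|)$, the continuity $\lambda(|\psi|)\to 0$ and $\Xi(a)\to 1$ at $a=0$ lets me pick $C_1>0$ (depending only on $d$ through $\eta,C_0$) so small that $|\psi|\le C_1$ forces simultaneously $|\psi|<1/9$, $\eta e^{\lambda(|\psi|)}<1$, and $\Xi(\lambda(|\psi|))\le 2$. \thmref{graph} then gives $\bar R^*_{\nu,\psi}\ge 1-\tfrac12\sqrt{2-1}=1/2$, hence $R^*_{\cC,\psi}\ge 1/2$ for every $k$.

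For the upper bound, I would use \prpref{V} with the path length $k$. The crucial input is the cardinality bound $|\cC|\le (2d)^{k-1}$, which follows by greedy expansion from $v_0$ in the lattice. Hence $\tfrac{8}{k}\log|\cC|\le 8\log(2d)$, and the prescription for $t=t(k)$ in \prpref{V} only needs $h(2p_t)\ge 8\log(2d)\vee 1$. Since $h$ is continuous and $h(1)=0$, $h(x)\to\infty$ as $x\to 0^+$, this is solved by some $t_\star=t_\star(d)>0$ independent of $k$; taking the largest admissible $t$ at level $k$ therefore yields $t(k)\ge t_\star$ eventually. Setting $C_2 := 1 - (t_\star/\mathsf{N}^{-1}(4/5))^2\in(0,1)$, any $\psi\ge C_2$ satisfies the hypothesis of \prpref{V}, so $R_{\cC,\psi}(f_{t(k)})\to 0$. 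Negative $\psi$ with $|\psi|>C_2$ is handled by Remark~\ref{rem:negative-psi}, running the sign-flipped variant of $V_{t,S}(j)$ in parallel with a Bonferroni correction. Finally, since $C_1$ and $C_2$ are each determined solely by $d$, we may enlarge $C_2$ or shrink $C_1$ to ensure $C_1\le C_2<1$.

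The only genuinely delicate step is the lower bound: verifying that the EIT prior from \cite{MR1634419} survives the restriction to length $k\le m$ on the torus without inflating $\eta$, and that embedding $\ZZ^3\hookrightarrow \ZZ^d$ does not spoil self-avoidance in the target lattice. Both are routine but need to be stated explicitly. The upper bound, by contrast, reduces to plugging the elementary count $(2d)^{k-1}$ into \prpref{V}.
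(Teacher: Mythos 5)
Your proposal is correct and takes essentially the same approach as the paper's sketch in \secref{lattice}. Both the lower bound (the Benjamini--Pemantle--Peres exponential-intersection-tails prior, restricted to the first $k-1$ steps, embedded into dimension $d$, and fed into \thmref{graph} with $a=\lambda(|\psi|)$) and the upper bound (the count $|\cC|\le (2d)^{k-1}$ fed into \prpref{V}, yielding a $k$-independent threshold $t_\star$ depending only on $d$ and hence $C_2=1-(t_\star/\mathsf{N}^{-1}(4/5))^2$) match the paper's argument, with the negative-$\psi$ case handled via Remark~\ref{rem:negative-psi} in both.
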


\begin{rem}
As long as $k \le m$, the size of the lattice does not matter since the starting location is known.  Therefore, an asymptotic analysis is necessarily in terms of large $k$.
\end{rem}

\begin{rem} \label{rem:conj1}
We conjecture that the constants in \corref{known} are identical, meaning, that $C_1 = C_2$.  Our arguments show that this is so if we replace $\liminf$ with $\limsup$ in the statement, for in that case one can take 
\beq \label{C-ddag}
C_1 = C_2 = C_\ddag := \inf\big\{C > 0 : \lim_{k \to \infty} R^*_{\cC,\psi} = 0 \text{ when } |\psi| > C\big\}~,
\eeq
and the corollary implies that $C_\ddag \in (0,1)$.
\end{rem}

\subsection{Unknown starting point}
 Let $\cC$ be the class of all self-avoiding paths with $k$ nodes in $\cV$.  Assume that $m=n^{1/d}$ is a multiple of $2k$ for simplicity.
To define the prior, we partition the lattice into hypercubes of side length $2 k - 1$, indexed by $J$, and let $v_j$ denote the center of the hypercube $j \in J$.  The number of such hypercubes satisfies $|J| \sim (m/2k)^d = n/(2k)^d$.  
Still in dimension $d \ge 3$, let $\nu_j$ be a prior on self-avoiding paths starting at $v_j$
satisfying \eqref{EIT}
and let $\nu$ be the even mixture of all these priors.  Noting that paths starting from different origin nodes cannot intersect, for any $a > 0$
small enough that $e^a \eta < 1$, 
\[
\E_{\nu \otimes \nu} [\exp(a |S \cap T|)] 
= 1 - \frac1{|J|} + \frac1{|J|} \E_{\nu_j \otimes \nu_j} [\exp(a |S \cap T|)] 
\le 1 + \frac{\Xi(a)}{|J|}~,
\]
where $j \in J$ is arbitrary. 
We use this upper bound in \thmref{graph} with $a = \lambda(|\psi|)$ and get that $\bar{R}_{\nu, \psi}^* \ge 1 - \frac12 \sqrt{2/|J|}$ when $|\psi|$ is so small that $\eta \exp(\lambda(|\psi|)) < 1$ and $\Xi(\lambda(|\psi|)) \le 2$.  
Noting that $|J| \to \infty$ when $m \gg k$ (i.e., the size of the grid dominates the path length), we see that $\bar{R}_{\nu, \psi}^* \to 1$.  

Conversely, assume that $k/\log n \ge C_0$ for some positive constant $C_0$. 
Then there exists another positive constant depending only on the dimension $d$ and $C_0$ such that, if $\psi$ is larger than that constant, the test defined in \eqref{def_test_vt} asymptotically separates the hypotheses.  This comes from \prpref{V} and the fact that, in the $d$-dimensional integer lattice \eqref{lattice} with a total of $n = m^d$ nodes, there are at most $n (2d)^{k-1}$ paths of length $k$.
Following Remark~\ref{rem:negative-psi}, we can handle the case where $|\psi|$ is large enough in a similar way.
We thus arrive at the following.

\begin{cor} \label{cor:unknown}
Consider the context of \corref{known} but now assuming that the starting location is unknown and that $k/\log m \ge C_0$ for some constant $C_0 >0$.
There exist constants $0 < C_1\le C_2 < 1$ depending only on $d$ and $C_0$ such that, when $|\psi| < C_1$, $\lim_{m \to \infty} R^*_{\cC,\psi} = 1$, while when $|\psi| > C_2$, $\lim_{m \to \infty} R^*_{\cC,\psi} = 0$.
\end{cor}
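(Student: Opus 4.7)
The plan is to follow the blueprint laid out in the two paragraphs preceding the statement; I will focus on $\psi > 0$, since the sign-flipped case is handled as in Remark \ref{rem:negative-psi}.

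\textbf{Lower bound.} I would exhibit a prior $\nu$ on $\cC$ whose two independent draws almost never intersect. First I would partition the torus into $|J| \sim n/(2k)^d$ hypercubes of side length $2k-1$, with centers $v_j$; since $d \ge 3$, I would translate the EIT prior invoked in \corref{known} onto each hypercube to obtain a prior $\nu_j$ on length-$k$ self-avoiding paths starting at $v_j$ that remain inside the hypercube (each such path reaches $\ell^\infty$-distance at most $k-1$ from $v_j$), and take $\nu = |J|^{-1}\sum_{j}\nu_j$. Paths drawn from distinct hypercubes cannot intersect, so
\[
\E_{\nu\otimes\nu}\bigl[\exp(a|S\cap T|)\bigr] = 1 - \frac{1}{|J|} + \frac{1}{|J|}\,\E_{\nu_j\otimes\nu_j}\bigl[\exp(a|S\cap T|)\bigr] \le 1 + \frac{\Xi(a)}{|J|}~,
\]
where $\Xi$ is as in the text preceding \corref{known}. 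Plugging $a = \lambda(|\psi|)$ into \thmref{graph}, together with $R^*_{\cC,\psi} \ge \bar{R}^*_{\nu,\psi}$, will give
\[
R^*_{\cC,\psi} \ge 1 - \tfrac12\sqrt{\Xi(\lambda(|\psi|))/|J|}~.
\]
Because $\lambda(|\psi|) \to 0$ and $\Xi(\lambda(|\psi|)) \to 1$ as $\psi \to 0$, I can pick $C_1 = C_1(d) \in (0,1/10)$ so small that, for $|\psi| \le C_1$, both $\eta\, e^{\lambda(|\psi|)} < 1$ and $\Xi(\lambda(|\psi|)) \le 2$ hold; the right-hand side then tends to $1$ since $|J| \to \infty$ as $m \to \infty$.

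\textbf{Upper bound.} I will use the crude count $|\cC| \le n(2d)^{k-1}$ (there are $n$ choices of starting node and at most $2d$ choices at each of the remaining $k-1$ steps). The hypothesis $k \ge C_0 \log m = (C_0/d)\log n$ yields
\[
\frac{8}{k}\log|\cC| \le \frac{8\log n}{k} + 8\log(2d) \le \frac{8d}{C_0} + 8\log(2d) =: B~,
\]
so the lower bound on $h(2p_t)$ required by \prpref{V} is at most the constant $B \vee 1$, depending only on $d$ and $C_0$. Since $h(2p_t) \to \infty$ as $t \to 0^+$, I can select $t = t(d, C_0) > 0$ with $h(2p_t) \ge B \vee 1$; then, setting $C_2 = 1 - (t/\mathsf{N}^{-1}(4/5))^2 \in (0,1)$, enlarged if necessary so that $C_2 \ge C_1$, \prpref{V} yields $R_{\cC,\psi}(f_t) \to 0$ whenever $\psi \ge C_2$.

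\textbf{Main hurdle.} The computations are routine once \thmref{graph} and \prpref{V} are in hand. The only conceptual point deserving care is checking that the EIT construction of \cite{MR1634419} transplants onto every hypercube of the torus while retaining property \eqref{EIT}; this reduces to the translation invariance of the $\mathbb{Z}^3$-construction and the elementary fact that a length-$k$ self-avoiding path starting at $v_j$ fits inside the hypercube of side $2k-1$ centered at $v_j$.
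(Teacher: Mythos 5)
Your proposal is correct and follows essentially the same route as the paper: the lower bound uses the same even mixture of translated EIT priors on the hypercube partition combined with Theorem~\ref{thm:graph}, and the upper bound applies Proposition~\ref{prp:V} with the same count $|\cC|\le n(2d)^{k-1}$ and the constraint $k\gtrsim\log n$ to keep $\tfrac{8}{k}\log|\cC|$ bounded by a constant depending only on $d$ and $C_0$. The only elaborations you add (a path of $k$ nodes fits in the hypercube of side $2k-1$; translation invariance of the EIT construction) are points the paper leaves implicit, and they are correct.
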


\begin{rem}
As in Remark~\ref{rem:conj1}, we conjecture that we may take $C_1 = C_2$, defined as in \eqref{C-ddag}.  (This definition would lead to a different constant in the present setting.) 
\end{rem}

\begin{rem}
The constants in \corref{known} and \corref{unknown} are implicit.  Our analysis provides some nontrivial bounds on these constants.  However, it is not precise enough to lead to the exact values.  
We note that the same is true in the (simpler) detection-of-means setting \cite{maze}.  
Also, we reveal a regime where the minimal correlation coefficient $\psi$ allowing asymptotic hypotheses separation (i.e., $R^*_{\cC,\psi}\to 0$) is bounded away from $0$ and $1$ when both $k$ and $n$ go to infinity.
(The situation is qualitatively different in dimension $d=2$ and we refer the reader to \cite{maze} for a detailed treatment of that case in the detection-of-means setting.)
\end{rem}

%
%
%
%

\section{Proofs} \label{sec:proofs}

\subsection{Preliminaries}

Let $\bGamma(\psi)$ denote the covariance operator of an (infinite) autoregressive model of order 1 with parameter coefficient $\psi$.
The operator $\bGamma(\psi)$ is positive definite and invertible when $|\psi|<1$.  Note that any $S \in \cC$ is homomorphic to $\{1, \dots, |S|\}$, and identifying the two, we have $(\bGamma_S(\psi))_{i,j} = \psi^{|i-j|}$, where $\bGamma_S(\psi)$ is the principal submatrix of the covariance operator $\bGamma(\psi)$
indexed by $S$.

Any autoregressive process $Y=(Y_i)_{i\in \bbZ}$  of order $1$ with parameter $\psi$  can be represented as a Gaussian Markov random field (GMRF) on the line. We have the decomposition 
\beq\label{eq:definition_GMRF}
Y_i = \phi Y_{i-1}+ \phi Y_{i+1}+ \epsilon_i \ , 
\eeq
where $\epsilon_i\sim \cN(0,\sigma_{\phi}^2)$ is independent of $(Y_j)_{j\neq i}$ and 
\beq\label{eqdef:phi}
\phi := \frac{\psi}{1+\psi^2}\ , \quad \quad \quad \sigma^2_{\phi} := \frac{1-\psi^2}{1+\psi^2}\ . 
\eeq
We start with some simple remarks relating autoregressive processes of order $1$ to GMRFs. See
\cite[Sect. 1.3]{MR1344683} for more details. This representation of a stationary autoregressive process  enables us to adapt some of the arguments developed in  \citep{arias2015detecting} for stationary GMRFs.
 
\begin{lem}
\label{lem:control_Gamma_S}
Identify $S$ with $(1,\ldots , |S|)$ and consider any $\psi\in (-1,1)$.  Then
\[
(\bGamma_S^{-1}(\psi))_{i,j} =
\left\{ \begin{array}{ll} 
1/\sigma^2_{\phi} & \text{if } i=j \text{ and  }i\in \{2,\ldots, |S|-1\}, \\
1/(1-\psi^2) & \text{if } i\in \{1,|S|\}, \\
- \phi/\sigma_{\phi}^2 & \text{if }|i-j|=1, \\
0 & \text{if }|i-j|>1.
\end{array}\right.
\]
\end{lem}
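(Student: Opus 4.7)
The plan is to read off the precision matrix directly from the exponent of the AR(1) joint density, using its Markov factorization. Identifying $S$ with $\{1,\ldots,k\}$ where $k = |S|$, I would start from the fact that under $\PROB_{S,\psi}$ we have $X_1 \sim \cN(0,1)$ and, for $j \ge 2$, the innovations $X_j - \psi X_{j-1}$ are independent $\cN(0,1-\psi^2)$ and independent of $X_1$. Consequently the joint log-density evaluated at $x = (x_1,\ldots,x_k)$ equals
\[
-\frac{1}{2} x_1^2 \;-\; \frac{1}{2(1-\psi^2)} \sum_{j=2}^{k} (x_j - \psi x_{j-1})^2 \;+\; \text{const},
\]
and since this must also equal $-\frac{1}{2} x^\top \bGamma_S^{-1}(\psi) x + \text{const}$, the entries of $\bGamma_S^{-1}(\psi)$ can be obtained by matching coefficients of $x_i^2$ and $x_i x_j$.

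Next I would expand $(x_j - \psi x_{j-1})^2 = x_j^2 - 2\psi x_j x_{j-1} + \psi^2 x_{j-1}^2$ and collect. No products $x_i x_j$ with $|i-j| \ge 2$ appear, so the precision matrix is automatically tridiagonal; the adjacent cross terms give $(\bGamma_S^{-1})_{j,j+1} = -\psi/(1-\psi^2) = -\phi/\sigma_\phi^2$, using the identities \eqref{eqdef:phi}. An interior index $i \in \{2,\ldots,k-1\}$ collects a contribution $x_i^2$ from the $j=i$ summand together with $\psi^2 x_i^2$ from the $j=i+1$ summand, yielding $(\bGamma_S^{-1})_{i,i} = (1+\psi^2)/(1-\psi^2) = 1/\sigma_\phi^2$. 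The endpoint $i=1$ combines the $-\frac{1}{2} x_1^2$ from the marginal of $X_1$ with $-\psi^2 x_1^2/(2(1-\psi^2))$ from the $j=2$ summand, giving $(\bGamma_S^{-1})_{1,1} = 1/(1-\psi^2)$; and $i=k$ receives only the $-x_k^2/(2(1-\psi^2))$ contribution from the $j=k$ summand, producing $(\bGamma_S^{-1})_{k,k} = 1/(1-\psi^2)$ as well.

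There is no serious obstacle, only finite bookkeeping. The one conceptual point worth noting is that the boundary value $1/(1-\psi^2)$ (rather than the interior value $1/\sigma_\phi^2$) arises because the two endpoint indices participate in only one AR innovation term each, whereas interior indices appear in two consecutive innovations; this is the same mechanism by which the GMRF representation \eqref{eq:definition_GMRF}, stationary on all of $\bbZ$, picks up boundary corrections when restricted to a finite segment.
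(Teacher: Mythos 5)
Your proof is correct, and it takes a genuinely different route from the paper's. The paper works backwards from the GMRF characterization of the precision matrix: for interior nodes it uses the bidirectional conditional regression \eqref{eq:definition_GMRF} and the general fact that $[(\bGamma_S^{-1})_{i,i}]^{-1}$ equals the conditional variance of $X_i$ given the rest while $-(\bGamma_S^{-1})_{i,j}/(\bGamma_S^{-1})_{i,i}$ are the conditional regression coefficients; for the endpoints it switches to the one-sided AR(1) representation $Y_{|S|} = \psi Y_{|S|-1} + \omega_{|S|}$. Your approach instead factorizes the joint density along the Markov chain, expands the resulting quadratic form, and matches coefficients against $-\tfrac12 x^\top \bGamma_S^{-1} x$. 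Both yield the same tridiagonal matrix, and your diagonal, off-diagonal, and boundary computations check out (in particular $\phi/\sigma_\phi^2 = \psi/(1-\psi^2)$ and $1/\sigma_\phi^2 = (1+\psi^2)/(1-\psi^2)$, consistent with \eqref{eqdef:phi}). The paper's argument is more conceptual and ties in with the GMRF machinery it invokes elsewhere (and borrows from the cited reference on GMRFs), whereas yours is more elementary and self-contained, requiring only the innovations factorization; your closing remark about why the endpoints get the smaller value $1/(1-\psi^2)$ is exactly the right intuition and is essentially the content of the paper's separate boundary calculation.
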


\begin{proof}
We leave $\psi$ implicit and write $\bGamma$ for $\bGamma(\psi)$.
Denote by $Y_S$ the restriction of the stationary autoregressive process $Y$ to $S$. First consider any index $i\in \{2,\ldots, |S|-1\}$. 
By the Markov property \eqref{eq:definition_GMRF}, conditionally to $(Y_{i-1},Y_{i+1})$, $Y_i$ is
independent to all the remaining variables. Thus, the conditional
distribution of $Y_{i}$ given $(Y_j)_{j\neq i}$ is the same as the conditional
distribution of $Y_{i}$ given $(Y_{j})_{j\in S\setminus\{i\}}$. This
conditional distribution characterizes the $i$-th row of the inverse covariance matrix
$\bGamma^{-1}_S$. More precisely,  the conditional variance $\sigma^2_{\phi}$ of $Y_i$ given $Y_{S}$ is $[(\bGamma_S^{-1})_{i,i}]^{-1}$. Furthermore, $-(\bGamma^{-1})_{i,j}/(\bGamma^{-1})_{i,i}$ is the $j$-th parameter of the conditional regression \eqref{eq:definition_GMRF} of $Y_i$ given $(Y_j)_{j\neq i}$,
and therefore we conclude that
 $(\bGamma^{-1})_{i,i}= (\sigma^2_{\phi})^{-1}=(\bGamma_S^{-1})_{i,i} $ and $(\bGamma^{-1})_{i,j}/(\bGamma^{-1})_{i,i}$ equals $-\phi$ if $|j-i|=1$ and is zero otherwise.
 
Now consider the case $i=|S|$, $i=1$ being handled similarly. Since $\bGamma^{-1}_{S}$ is a symmetric matrix, we only have to compute $(\bGamma^{-1}_{S})_{|S|,|S|}$ and $(\bGamma^{-1}_{S})_{|S|,1}$. By definition of  autoregressive processes, we have
\[Y_{|S|}= \psi Y_{|S|-1}+ \omega_{|S|}\ , \]
where $\omega_{|S|}\sim \cN(0, 1- \psi^2)$ is independent of $(Y_1,\ldots Y_{|S|-2})$. The above expression characterizes the conditional regression of $Y_{|S|}$ given $(Y_1, \ldots Y_{|S|-1})$. Arguing as previously, we conclude that  
 $(\bGamma^{-1})_{|S|,1}= 0$ and $(\bGamma^{-1})_{|S|,|S|}= 1/(1-\psi^2)$.
 \end{proof}

We let $\|\bA\|$ denote the operator norm of a matrix $\bA$.  
\begin{lem} \label{lem:trace}
Let $\bA$ and $\bB$ be (complex or real) matrices of same dimensions.  Let ${\rm col}(\bA)$ index the column vectors of $\bA$ that are nonzero.  Then
\[
|\tr(\bA^\top \bB)| \le |{\rm col}(\bA) \cap {\rm col}(\bB)| \|\bA\| \|\bB\|~.
\] 
\end{lem}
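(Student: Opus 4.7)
The plan is to expand the trace in terms of columns and then apply Cauchy--Schwarz together with a standard column-wise bound on the operator norm. Writing $\bA_{\cdot j}$ for the $j$-th column of $\bA$, I observe that
\[
\tr(\bA^\top \bB) = \sum_{j} (\bA^\top \bB)_{j,j} = \sum_{j} \sum_i A_{i,j} B_{i,j} = \sum_{j} \langle \bA_{\cdot j}, \bB_{\cdot j}\rangle,
\]
so only those $j$ for which both $\bA_{\cdot j}$ and $\bB_{\cdot j}$ are nonzero contribute to the sum. This immediately restricts the sum to $j\in {\rm col}(\bA) \cap {\rm col}(\bB)$.

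Next, I would bound each surviving term by Cauchy--Schwarz:
\[
|\langle \bA_{\cdot j}, \bB_{\cdot j}\rangle| \leq \|\bA_{\cdot j}\|\,\|\bB_{\cdot j}\|.
\]
Then I would use the fact that each column is the image of a coordinate vector: since $\bA_{\cdot j} = \bA e_j$ and $\|e_j\|=1$, the definition of the operator norm gives $\|\bA_{\cdot j}\| \leq \|\bA\|$, and likewise for $\bB$.

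Combining these steps yields
\[
|\tr(\bA^\top \bB)| \leq \sum_{j \in {\rm col}(\bA) \cap {\rm col}(\bB)} \|\bA\|\,\|\bB\| = |{\rm col}(\bA) \cap {\rm col}(\bB)|\,\|\bA\|\,\|\bB\|,
\]
which is the desired inequality. There is no substantive obstacle here: the argument is a two-line application of Cauchy--Schwarz after identifying the trace as a sum of column-wise inner products; the only mild point worth noting is the column-norm bound by the operator norm, which is itself immediate from the definition.
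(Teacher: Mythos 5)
Your proof is correct, and it takes a genuinely different route from the paper. The paper's argument stays at the matrix level: it restricts to the submatrices $\bA_J, \bB_J$ with columns in $J = {\rm col}(\bA)\cap {\rm col}(\bB)$, bounds $|\tr(\bA_J^\top \bB_J)| \le |J|\,\|\bA_J^\top \bB_J\|$ (trace of a $|J|\times|J|$ matrix is at most $|J|$ times its operator norm), then applies submultiplicativity $\|\bA_J^\top\bB_J\|\le\|\bA_J\|\|\bB_J\|$ and finally $\|\bA_J\|\le\|\bA\|$. You instead expand the trace as a sum of column-wise bilinear pairings $\sum_j \sum_i A_{i,j}B_{i,j}$, restrict the sum to $J$, and apply Cauchy--Schwarz termwise together with $\|\bA e_j\|\le\|\bA\|$. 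Your argument is more elementary (it avoids the trace-versus-operator-norm bound and submultiplicativity, needing only Cauchy--Schwarz and the definition of the operator norm) and in fact yields the slightly sharper intermediate bound $\sum_{j\in J}\|\bA_{\cdot j}\|\,\|\bB_{\cdot j}\|$. One small point worth phrasing carefully: since the lemma uses the plain transpose $\bA^\top$ even in the complex case, $(\bA^\top\bB)_{j,j}=\sum_i A_{i,j}B_{i,j}$ has no conjugation, so it is not literally the Hermitian inner product of the columns; nonetheless $\bigl|\sum_i A_{i,j}B_{i,j}\bigr|\le \sum_i |A_{i,j}|\,|B_{i,j}|\le\|\bA_{\cdot j}\|\,\|\bB_{\cdot j}\|$ by the triangle inequality followed by Cauchy--Schwarz, so the bound you need is still valid.
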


\begin{proof}
Define the index set $J = {\rm col}(\bA) \cap {\rm col}(\bB)$.  
Let $\bA_{J}$ denote the submatrix of $\bA$ with columns indexed by $J$, and define $\bB_J$ similarly.  
We then have 
\[|\tr(\bA^\top \bB)| = |\tr(\bA_J^\top \bB_J)| \le |J| \|\bA_J^\top \bB_J\| \le |J| \|\bA_J\| \|\bB_J\| \le |J| \|\bA\| \|\bB\|~. \qedhere\] 
\end{proof}

\subsection{Proof of \thmref{graph}}

Recall that with $\psi$ fixed, $\bGamma_S$ denotes the covariance matrix of an autoregressive model of order $1$ of length $|S|$ and with parameter $\psi$.
As is well-known (and explained in \citep{arias2015detecting} for example), 
\beq \label{LR-risk}
\bar{R}^*_\nu = 1 - \frac12 \E_0 | L_\nu(\X) - 1 | \geq 1 - \frac12 \sqrt{\E_0[L_\nu(\X)^2] -1} ~,
\eeq
where
\[
L_\nu(x) := \sum_{S\in \cC} \nu(S) L_S(x)~,
\]
with
\beq \label{L_S}
L_S(x) := \exp\left(\tfrac12 x_S^\top (\bI_S - \bGamma_{S}^{-1}) x_S - \tfrac12 \log \det(\bGamma_{S})\right)~.
\eeq
We thus need to upper bound
\beq\label{L2}
\E_0[L_\nu(\X)^2] = \sum_{S,T\in \cC} \nu(S) \nu(T) \E_0[L_S(\X) L_T(\X)]~.
\eeq
Unlike in the setting that concerns \citep{arias2015detecting}, here two subgraphs $S$ and $T$ in the support of the prior $\nu$ may not be disjoint, and if $S \cap T \ne \emptyset$, $L_S(\X)$ and $L_T(\X)$ are not independent.  

Before we proceed, we leave the dependency in $\X$ implicit and we formally index the elements of $S$ by $\{1, \dots, k\}$. 
By \lemref{control_Gamma_S}, we have 
\[1\leq \left(\bGamma_S^{-1}\right)_{1,1} = \left(\bGamma_S^{-1}\right)_{k,k}= \frac{1}{1-\psi^2} \leq \left(\bGamma_S^{-1}\right)_{i,i}=\frac{1}{\sigma_{\phi}^2}~,\quad \forall i\in \{2,\ldots k-1\}~,\]
and
\[\left(\bGamma_S^{-1}\right)_{i,i+1}= \left(\bGamma_S^{-1}\right)_{i,i-1} = -\frac{\phi}{\sigma^2_{\phi}}~,\]
while all the other entries of $\bGamma_S^{-1}$ are zero. 
Hence,   the matrix   $\bA_{S}:= \bI_S - \bGamma_{S}^{-1}$ satisfies
\beq\label{AS-norm}
\|\bA_{S}\|\leq \sup_{j\in S}\sum_{i\in S}|(\bA_S)_{i,j}| \leq \frac{2|\phi|}{\sigma^2_{\phi}}+ \frac{1-\sigma_{\phi}^2}{\sigma_{\phi}^2} = \frac{2|\psi|}{1-|\psi|}~,
\eeq
where we used the expressions \eqref{eqdef:phi} of $\phi$ and $\sigma_{\phi}$ . 
Let us fix $S$ and $T$ two anomalous subsets in $\cC$. In the following $\wt{\bGamma}_S$ (resp. $\wt{\bGamma}_{T}$) denotes the covariance of $X_{S\cup T}$ when $X\sim \P_{S,\psi}$ (resp. $X\sim \P_{T,\psi}$). Note that the restriction of $\wt{\bGamma}_S$ to $S\times S$ is exactly $\bGamma_S$ whereas its restriction to $(T\setminus S) \times (T\setminus S) $ is the identity matrix. 
We have
\begin{align*}
\E [L_S L_T]
&=  \E \left[\exp\left(X_{S\cup T}^\top (\bI_{S\cup T} - \tfrac12 \wt{\bGamma}_S^{-1} - \tfrac12 \wt{\bGamma}^{-1}_{T} ) X_{S\cup T} - \tfrac12 \log \det(\bGamma_S) - \tfrac12 \log \det(\bGamma_{T}) \right) \right]\\
&=  \exp\left(- \tfrac12 \log \det (\wt{\bGamma}^{-1}_{S} + \wt{\bGamma}^{-1}_{T} - \bI_{S\cup T}) - \tfrac12 \log \det(\bGamma_{S}) - \tfrac12 \log \det(\bGamma_{T}) \right)~.
\end{align*}
We used the fact that
\beq \label{LS_LT_1}
\|\bI_{S\cup T} - \tfrac12 \wt{\bGamma}_S^{-1} - \tfrac12 \wt{\bGamma}^{-1}_{T}\| \le \frac12 \|\bA_{S}\| + \frac12 \|\bA_{T}\| \le \frac{2|\psi|}{1-|\psi|} < \frac12~,
\eeq
by \eqref{AS-norm} and the fact that $|\psi| \le 1/5$. 
Define $\wt{\bA}_S= \bI_{S\cup T}- \wt{\bGamma}_S^{-1}$ and $\wt{\bA}_T$ similarly.
Using these bounds, together with the fact that, for a symmetric matrix $\bB$ with operator norm strictly less than 1, 
\[\log \det (\bI - \bB) = \tr \log (\bI - \bB) = - \sum_{\ell=1}^\infty \frac1\ell \tr(\bB^\ell)~,\] 
we get 
\begin{align*}
\Lambda :
&= - \tfrac12 \log \det (\wt{\bGamma}^{-1}_{S} + \wt{\bGamma}^{-1}_{T} - \bI_{S\cup T}) - \tfrac12 \log \det(\bGamma_{S}) - \tfrac12 \log \det(\bGamma_{T}) \\
&= - \tfrac12 \log \det (\bI_{S \cup T} - \wt{\bA}_{S} - \wt{\bA}_T) + \tfrac12 \log \det(\bI_{S \cup T} - \wt{\bA}_{S}) + \tfrac12 \log \det(\bI_{S \cup T} - \wt{\bA}_T)t \\
&= \frac12 \sum_{\ell=1}^\infty \frac1\ell \left(\tr\big[(\wt{\bA}_{S} + \wt{\bA}_{T})^\ell\big] - \tr\big[\wt{\bA}_{S}^\ell\big] - \tr\big[\wt{\bA}_{T}^\ell\big]\right) \\
&= \frac12 \sum_{\ell=2}^\infty \frac1\ell \sum_{(s,t) \in Q_\ell} \tr\big[\wt{\bA}_{S}^{s_1}\wt{\bA}_{T}^{t_1} \cdots \wt{\bA}_{S}^{s_\ell} \wt{\bA}_{T}^{t_\ell}\big]~,
\end{align*}
where 
\[Q_\ell := \Big\{(s,t) \in (\{0,1\}^\ell \setminus \{0\}^\ell)^2 :  
s_1 + \cdots + s_\ell + t_1 + \cdots + t_\ell = \ell\Big\}~.\]

For any $(s,t) \in Q_\ell$, there exists $j$ such that either $s_j = t_j = 1$ or $t_{j-1} = s_j = 1$.  For example, assuming the former holds, we apply \lemref{trace} to get
\begin{align*}
\tr\big[\wt{\bA}_{S}^{s_1} \wt{\bA}_{T}^{t_1} \cdots \wt{\bA}_{S}^{s_\ell} \wt{\bA}_{T}^{t_\ell}\big] 
&= \tr\big[\big(\wt{\bA}_{S}^{s_1} \wt{\bA}_{T}^{t_1} \cdots \wt{\bA}_{S}^{s_j}\big) \big(\wt{\bA}_{T}^{t_j} \cdots \wt{\bA}_{S}^{s_\ell} \wt{\bA}_{T}^{t_\ell}\big)\big] \\
&\le |S \cap T| \|\wt{\bA}_{S}^{s_1} \wt{\bA}_{T}^{t_1} \cdots \wt{\bA}_{S}^{s_j}\| \|\wt{\bA}_{T}^{t_j} \cdots \wt{\bA}_{S}^{s_\ell} \wt{\bA}_{T}^{t_\ell}\| \\
&\le |S \cap T| \|\wt{\bA}_{S}\|^{s_1} \|\wt{\bA}_{T}\|^{t_1} \cdots \|\wt{\bA}_{S}\|^{s_\ell} \|\wt{\bA}_{T}\|^{t_\ell} \\
&\le |S \cap T| \zeta^\ell~, \quad \text{where } \zeta = \frac{2|\psi|}{1-|\psi|}~.
\end{align*}
Note that the last line comes from \eqref{LS_LT_1}.

With this, together with the fact that $|Q_\ell| \le \binom{2\ell}{\ell}$ and $(1-x)^{-1/2} = \sum_{n \ge 0} \binom{2n}{n} (x/4)^n$ for $x\in (0,1)$, and noting that $\zeta < 1/4$ when $|\psi|<1/9$, we obtain
\[
\Lambda 
\le \frac12 |S \cap T| \sum_{\ell \ge 2} \frac1\ell \binom{2\ell}{\ell} \zeta^\ell \\
\le \frac14 |S \cap T| \big[ (1 - 4 \zeta)^{-1/2} - 1 - 2 \zeta\big]\ . 
\]
We then conclude with \eqref{L2} and the expression of $\zeta$ in terms of $\psi$.

\subsection{Proof of \prpref{V}}

All through, we leave the dependence of $t$ in $k$ implicit. 

{\em Under the null.}
We first control $V_t^*$ under the null hypothesis.  For simplicity, assume that $k$ is even and decompose the statistics $V_{t,S}$ into $V_{t,S}= V_{1,t,S} + V_{2,t,S}$, where 
\[V_{1,t,S} := \sum_{j = 1}^{k/2} V_{t,S}(2j), \quad\text{ and }\quad V_{2,t,S} := \sum_{j = 2}^{k/2} V_{t,S}(2j-1)~,\] 
so that all the terms in $V_{1,t,S}$ (resp.~$V_{2,t,S}$) are independent. 
Define $V_{1,t}^{*} = \max_{S \in \cC} V_{1,t,S}$ and $V_{2,t}^{*} = \max_{S \in \cC} V_{2,t,S}$.  By symmetry, it suffices to bound $V_{1,t}^{*}$.

For any $S \in \cC$, we have $V_{1,t,S} \sim \Bin(k/2, p_t)$.  Thus, for any $S \in \cC$, with the union bound, we have 
\[
\P_0\{V_{1,t}^{*} \ge v\} \le |\cC| \P_0\{V_{1,t,S} \ge v\} \le |\cC| \P\big\{\Bin(k/2, p_t) \ge v\big\}~. 
\]
Define $b_t = 1/(2p_t)$ and $\varphi(b) = b(\log b -1) + 1$.
Choosing $v = k/4 = b_t k/2 p_t$, and using Bennett's inequality, the right-hand side is bounded by
\begin{align*}
|\cC| \exp\big(-k/2 p_t \varphi(b_t)\big) 
&=  \exp\big(\log |\cC| -(k/2) p_t \varphi(b_t)\big) \\
&= \exp\big(\log |\cC| -(k/2) \tfrac{1}{2}h(2p_t)\big) \\
&\le \exp\big((k/8 -(k/2) \tfrac{1}{2}) h(2p_t)\big) \\ 
&\le  \exp(- k/8),
\end{align*}
where the inequality holds eventually as the sample size increases.  

Thus we found that
\begin{align*}
\P_0\{V_t^{*} \ge k/2\} 
&\le \P_0\{V_{1,t}^{*} \ge k/4\} + \P_0\{V_{2,t}^{*} \ge k/4\} \\
&\le \exp(- k/8) + \exp(- k/8) = 2 \exp(- k/8)~.
\end{align*}

\def\q{q}
\medskip\noindent {\em Under the alternative.}
We now consider the alternative hypothesis.  
Let $(S, \psi) \in \cC \times (-1,1)$ denote the anomalous path and the autoregressive parameter. 
Recall that we assume that $\psi\geq 1 - (t/\mathsf{N}^{-1}(4/5))^2$.
Denote $S = (s_1, \dots, s_k)$.
By definition, $V_t^* \ge V_{t,S}$.
Define   $Z_j:= (X_{s_{j+1}}-X_{s_j})/\sqrt{2(1-\psi)}$ for any $j\in\{1,\ldots, k-1\}$. 
We have $Z_j \sim \cN(0,1)$ and $\E[Z_j Z_{j'}] = \psi^{|j-j'|-1}(\psi-1)/2$ for $j\neq j'$. 
Define $\q =  2\mathsf{N}(t/\sqrt{1-\psi}) - 1$, and note that $q \ge 3/5$ by our assumption on $\psi$. 

First, in terms of expectation, we obtain the bound
\beq\label{eq:lower_E_SV}
\E_S\left[V_{t,S}\right]= (k-1)\q\geq (k-1) \frac35~.
\eeq

We now bound the variance.
Fix any $j\neq j'$, and denoting $a=\E[Z_j Z_{j'}]$, define $U= (Z_j-aZ_{j'})/\sqrt{1-a^2}$. 
Note that $U \sim \cN(0,1)$ is independent of $Z_{j'}$.
For any $x \in \bbR$, we have
\begin{align*}
\P\left\{|Z_j|\leq \frac{t}{\sqrt{1-\psi}}~\Big|~Z_{j'}=x \right\} &=  \P\left\{\frac{-t-ax}{\sqrt{1-\psi}\sqrt{1-a^2}} \leq U\leq  \frac{t-ax}{\sqrt{1-\psi}\sqrt{1-a^2}}\right\}
\\
&\le  \P\left\{| U|\leq  \frac{t}{\sqrt{1-\psi}\sqrt{1-a^2}}\right\} \\
&=  2\P\left\{U \leq \frac{t}{\sqrt{1-\psi}} \right\} -1 + 2\P\left\{ \frac{t}{\sqrt{1-\psi}} \leq  U\leq \frac{t}{\sqrt{1-\psi}\sqrt{1-a^2}} \right\}\\
&=  \q+ 2\left[\mathsf{N}\left(\frac{t}{\sqrt{1-\psi}\sqrt{1-a^2}}\right) - \mathsf{N}\Big(\frac{t}{\sqrt{1-\psi}}\Big) \right] \\
&\le  \q + 2 \frac{t}{\sqrt{1-\psi}} \Big[\frac1{\sqrt{1-a^2}} - 1\Big] \frac1{\sqrt{2 \pi}} \exp\left[-\frac{t^2}{2(1-\psi)}\right]\\
&\le  \q + \big(\tfrac{4}{3})^{3/2} a^2\frac{t}{\sqrt{1-\psi}}  \frac1{\sqrt{2 \pi}} \exp\left[-\frac{t^2}{2(1-\psi)}\right]\\
&\le  \q + a^2~.
\end{align*}
We used the fact that $U$ is standard normal in the second and fourth line, a Taylor development (of order 1) in the fifth line, the fact that $|a|\leq 1/2$ in the sixth line, and the fact that $y e^{-y^2}\leq (2e)^{-1/2}$ for any $y \ge 0$ in the last line. 
As a consequence, 
\begin{align*}
\Cov\Big(\IND{|Z_j|\leq \tfrac{t}{\sqrt{1-\psi}}},\IND{|Z_{j'}|\leq \tfrac{t}{\sqrt{1-\psi}}}\Big)
&= \int_{- \frac{t}{\sqrt{1-\psi}}}^{\frac{t}{\sqrt{1-\psi}}} \P\left\{|Z_j|\leq \frac{t}{\sqrt{1-\psi}}~\Big|~Z_{j'}=x \right\}  \frac{e^{-x^2/2}}{\sqrt{2\pi}} {\rm d}x - \q^2 \\
&\le (\q +  a^2) \q - \q^2 
= \q a^2 = \q \psi^{2|j-j'|-2}(1-\psi)^2 ~,
\end{align*}
for all $j\ne j'$. 
We conclude that 
\begin{align*}
\Var_S\left(V_{t,S}\right)
&= \Var_S\left(\sum_{j=1}^{k-1}\IND{|Z_j|\leq \frac{t}{\sqrt{1-\psi}}}\right)\\
&\le (k-1)\left[\q(1-\q) + 2\q(1-\psi)^2  \sum_{i=1}^{\infty}\psi^{2(j-1)}\right]\\
&\le  (k-1)\left[\q(1-\q) + 2\q\frac{1-\psi}{1+\psi}\right] \\
&\leq 3\E_S\left[V_{t,S}\right]~.
\end{align*}

Now, by Chebyshev's inequality, we have for $b > 0$,
\[
\P_S\Big\{V_{t,S} \ge \E_S\left[V_{t,S}\right] - b \sqrt{\Var_S\left[V_{t,S}\right]}\Big\} \ge 1 - 1/b^2~,
\]
with 
\begin{align*}
\E_S\left[V_{t,S}\right] - b \sqrt{\Var_S\left[V_{t,S}\right]}
&\ge \E_S\left[V_{t,S}\right] - b \sqrt{3 \E_S\left[V_{t,S}\right]}~,
\end{align*}
and choosing $b = \log k$, for $k$ large enough we have, in view of \eqref{eq:lower_E_SV},
\[
\E_S\left[V_{t,S}\right] - b \sqrt{3 \E_S\left[V_{t,S}\right]} \ge k/2~.
\]
When this is the case, we have
\[
\P_{S, \psi}\{V_t^* < k/2\} 
\le \P_{S, \psi}\{V_{t,S} < k/2\}
\le 1/b^2 = 1/(\log k)^2~.
\]

{\em Risk.} 
Thus, for $k$ large enough, say $k > k_0$ (where $k_0$ is a numerical constant), the risk of $f_t$ is bounded by $2 e^{-k/8} + 1/(\log k)^2$.  We conclude that the stated result holds with $r_k = 2$ for $k \le k_0$ and $r_k =   2 e^{-k/8} + 1/(\log k)^2$ for $k > k_0$.

\subsection*{Acknowledgements}

EAC was partially supported by the US National Science Foundation (DMS 1223137, 1120888).
GL was partially supported by the Spanish Ministry of Economy and Competitiveness, Grant MTM2015-67304-P and FEDER, EU.
NV was partially supported by the French Agence Nationale de la Recherche (ANR 2011 BS01 010 01 projet Calibration).

\bibliographystyle{chicago}
\bibliography{../ref}

\end{document}